\newcommand{\TheTitle}{A Revisit of Block Power Methods for Finite State Markov Chain Applications} 
\newcommand{\TheAuthors}{Hao Ji, Seth H. Weinberg, and Yaohang Li}
\headers{\TheTitle}{\TheAuthors}
\title{{\TheTitle}
}
\author{
  Hao Ji\thanks{Department of Computer Science, California State Polytechnic University, Pomona, CA (\email{hji@cpp.edu}, \url{http://www.cpp.edu/\string~hji/}).}
  \and
  Seth H. Weinberg\thanks{Department of Biomedical Engineering, Virginia Commonwealth University, Richmond, VA (\email{shweinberg@vcu.edu}, \url{http://biomedical.egr.vcu.edu/faculty/seth-h-weinberg-ph-d/}).}
  \and
   Yaohang Li\thanks{Department of Computer Science, Old Dominion University, Norfolk, VA (\email{yaohang@cs.odu.edu}, \url{http://www.cs.odu.edu/\string~yaohang/}).}
}
\begin{document}

\maketitle

\begin{abstract}
According to the fundamental theory of Markov chains, under a simple connectedness condition, iteration of a Markov transition matrix $P$, on any random initial state probability vector, will converge to a unique stationary distribution, whose probability vector corresponds to the left eigenvector associated with the dominant eigenvalue $\lambda_1 = 1$. The corresponding convergence speed is governed by the magnitude of the second dominant eigenvalue $\lambda_2$ of $P$. Due to the simplicity to implement, this approach by using power iterations to approximate the dominant eigenpair, known as the power method, is often applied to estimate the stationary distributions of finite state Markov chains in many applications.

In this paper, we revisit the generalized block power methods for approximating the eigenvector associated with $\lambda_1 = 1$ of a Markov chain transition matrix. Our analysis of the block power method shows that when $s$ linearly independent probability vectors are used as the initial block, the convergence of the block power method to the stationary distribution depends on the magnitude of the $(s+1)$th dominant eigenvalue $\lambda_{s+1}$ of $P$ instead of that of $\lambda_2$ in the power method. Therefore, the block power method with block size $s$ is particularly effective for transition matrices where $|\lambda_{s+1}|$ is well separated from $\lambda_1 = 1$ but $|\lambda_2|$ is not. This approach is particularly useful when visiting the elements of a large transition matrix is the main computational bottleneck over matrix--vector multiplications, where the block power method can effectively reduce the total number of times to pass over the matrix. To further reduce the overall computational cost, we combine the block power method with a sliding window scheme, taking advantage of the subsequent vectors of the latest $s$ iterations to assemble the block matrix. The sliding window scheme correlates vectors in the sliding window to quickly remove the influences from the eigenvalues whose magnitudes are smaller than $|\lambda_{s}|$ to reduce the overall number of matrix--vector multiplications to reach convergence. Finally, we compare the effectiveness of these methods in a Markov chain model representing a stochastic luminal calcium release site.
\end{abstract}

\begin{keywords}
  Subspace iteration, Sliding window, Stationary distribution, Markov chain, Convergence
\end{keywords}

\begin{AMS}
  65F10, 65F15, 65F50
\end{AMS}

\section{Introduction}
\label{sec:1}

Finding the stationary (equilibrium) distribution of a finite state Markov chain is of great interest in many practical applications \cite{zhang1999finite,tauchen1986finite,wang1995finite}. Let $P$ denote an $n \times n$ probability transition matrix of a finite state Markov chain $K$. Based on the fundamental theorem of Markov chains \cite{feller2008introduction}, if $K$ is irreducible, aperiodic, and positive--recurrent, there is a unique stationary distribution characterized by a probability vector $\pi$ such that 
$$\pi^T P=\pi^T.$$
Here $\pi$ corresponds to the left eigenvector associated with the dominant eigenvalue $\lambda_1=1$  of $P$. It is also well known that the convergence rate of the Markov chain $K$ is geometric, which is controlled by the magnitude of the second dominant eigenvalue $\lambda_2$.

The power method \cite{golub2012matrix} is a simple numerical algorithm often applied to approximate the stationary distribution vector of a finite state Markov chain. Starting from a random probability distribution vector $x_0$, the power method is described by the power iteration,
$$x_{i+1}=P^T x_i.$$
The rationale of the power method is that after $k$ power iterations, the magnitudes of the rest eigenvalues other than $\lambda_1$ in $(P^T)^k$ are close to $0$ if $k$ is sufficiently large and then $P^T x_k$ is a good approximation to $\pi$.  As a result, the convergence rate of the power iteration is governed by the magnitude of the second dominant eigenvalue $\lambda_2$ of $P^T$. Despite its slow convergence when $|\lambda_2|$ is close to $1$, the power method has been popularly used in a variety of important Markov chain applications, particularly when the transition matrix $P$ is very large and sparse. For example, the power method is used in Google's PageRank algorithm to rank webpages in the Internet in their search engine results \cite{page1997pagerank}; Twitter employs the power method to recommend ``who to follow'' to its users \cite{kim2014twilite}; by exploring the graph constructed via content and link features, the power method is also applied to calculate the trust vector as the stationary distribution vector of the graph to fight spams \cite{yang2008fighting}; and computing the dominant eigenvectors of the random walk transition matrices of large--scale biological networks using power iterations can reveal important relationships among gene expressions, protein functions, and drugs \cite{lan2016predicting,zhao2016new}.

The block power method is a generalization of the power method by extending the power iterations to the block power iterations (a.k.a. subspace iterations, orthogonal iterations, or simultaneous iterations) \cite{mccormick1977simultaneous,stewart1976simultaneous,rutishauser1969computational} on a block matrix consisting of multiple columns. The block power method is often used in applications where multiple eigenvalues/eigenvectors are of interest. Using block form can also lead to convergence rate enhancement in numerical linear algebra algorithms such as finding solutions of linear systems \cite{hy16bfbcg} and solving least square problems \cite{hy16bcgls}.  In this paper, we revisit the generalized block power method and theoretically analyze its convergence when only the eigenvector associated with the dominant eigenvalue is of interest, as in many Markov chain applications. Then, we introduce a sliding window scheme, combined with the block power method, to reduce the overall number of matrix-vector multiplications to reach convergence by assembling a block matrix with a sliding window from the prior iterations. Correlating vectors in the sliding window is expected to remove the influences from the eigenvalues whose magnitudes are smaller than $|\lambda_{s}|$ and thus leads to a faster approximation to the dominant eigenvector. Finally, the numerical effectiveness of the block power method and the sliding window scheme is demonstrated in a Markov chain application of modeling a stochastic luminal calcium release site.

\section{Methods}
\label{sec:2}

\subsection{The block power method}
\label{subsec:2.1}

The block power method is a generalization of the power method. The generalized block power method consists of an iteration step and a decomposition step. In the iteration step, block power iteration is employed to compute the multi--dimensional invariant subspace. For a non--Hermitian matrix $P$, let $X_0$ be an $n \times s$ matrix with orthonormal columns and the following subspace iteration process generates a series of matrices $X_i$.\\
\begin{tabbing}
\hspace*{1.5cm}\= \hspace*{0.5cm}\= \hspace*{3.5cm}\= \kill 
	\>\textbf{for} $i = 1, 2, \cdots, k,\cdots$ \\
	\>\>$Z_i=P^T X_{i-1}$ \\
	\>\>$X_i R_i=Z_i$ \>/* QR factorization of $Z_i$ */  \\
	\>\textbf{end for} \\
\end{tabbing}
where $s$ is referred to as the block size of the block power method. As a result, $X_i$ tends towards the invariant subspace of $P$ with respect to the eigenvalues $\lambda_1,\lambda_2, \cdots,\lambda_s$, sorted by their magnitudes. Generally, the block power method is used to estimate the top $s$ eigenvalues with largest magnitudes and their associated eigenvectors of a matrix, which converges at a rate proportional to $|\lambda_{s+1} |/|\lambda_s |$ \cite{golub2012matrix}. 

The block power iteration can be tailored for Markov chain applications. Since the dominant eigenvalue $\lambda_1$ in the transition matrix $P$ is known to be $1$, the normalization step in the subspace iteration process is no longer necessary and the block power method can be simplified as\\
\begin{tabbing}
\hspace*{1.5cm}\= \hspace*{0.5cm} \= \kill 
	\>\textbf{for} $i = 1, 2, \cdots, k,\cdots$ \\
	\> \>$X_i=P^T X_{i-1}$ \\
	\>\textbf{end for} \\
\end{tabbing}

Due to the fact that only the eigenvector associated with the dominant eigenvalue is of interest in Markov chain applications, the approximate eigenvector can be extracted from the block matrix $X_k$ by the following Schur--Rayleigh--Ritz step \cite{stewart1976simultaneous} in the decomposition step:\\

\begin{tabbing}
\hspace*{1.5cm}\= \hspace*{4cm} \= \kill 
    \>$Q_k R_k=X_k$  \>/* QR Decomposition */ \\
    \>$B_k=Q_k^T P^T Q_k$  \>/* Projection */ \\
    \>$U_k T_k U_k^T=B_k$  \>/* Schur Decomposition */ \\
    \>$Y_k=Q_k U_k.$ \\
\end{tabbing}

Provided that in the Schur decomposition of $B_k$, $U_k$ is chosen so that the diagonal elements of $T_k$ appear in non--increasing order by their absolute values, $Y_k^{(1)}$, the first column vector in $Y_k$, is an approximation of the dominant left eigenvector $v_1$ of $P$. The decomposition step involves numerical linear algebra operations including matrix--matrix multiplications, QR decomposition, and Schur decomposition, which is significantly more computationally costly than that of an iteration step. Fortunately, the decomposition step only needs to be carried out every certain (large) number of iteration steps when evaluating convergence and extracting the approximated dominant eigenvector are needed.

\subsection{Convergence analysis}
\label{subsec:2.2}
Theorem \ref{theorem1} shows that $Y_k^{(1)}$ converges to the dominant left eigenvector $v_1$ of $P$ at a rate of $O(|\lambda_{s+1}|^k )$. The proof of Theorem \ref{theorem1} relies on the result of Lemma \ref{lemma1} regarding the Schur--Rayleigh--Ritz approximation of an upper triangular matrix whose Schur vectors are the corresponding columns of the identity matrix. Thus, Lemma \ref{lemma1} is a special case of Theorem 3.2 stated in \cite{stewart1976simultaneous}, where the Markov transition matrix is considered and only the dominant eigenvector is of interest.

\begin{lemma}
\label{lemma1}
Suppose that $S$ is an upper triangular matrix where $\lambda_1=1$, $\lambda_2$, $\lambda_3$, $\cdots$, $\lambda_n$ are diagonal elements appearing in non--increasing order by their magnitudes. Let $\widetilde{Y}_k$ denote the Schur--Rayleigh--Ritz approximation corresponding to $S^k X_0$, where $X_0$ is an $n\times s$ initial block matrix and $k$ is the number of subspace iterations. Then, 
$$\emph{dist}\left( span \left\lbrace \widetilde{Y}_k^{(1)} \right\rbrace,span \left\lbrace I^{(1)} \right\rbrace  \right)=O(|\lambda_{s+1}|^k ),$$
where $\emph{dist}(\cdot,\cdot)$ denotes  the distance between these two spaces and $\widetilde{Y}_k^{(1)}$ and $I^{(1)}$ are the first column of matrix $\widetilde{Y}_k$ and identity matrix, respectively.
\end{lemma}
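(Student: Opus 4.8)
The plan is to exploit the upper-triangular structure of $S$ together with a block partition adapted to the split between the first $s$ eigenvalues and the remaining $n-s$. Write $S$ in $2\times 2$ block form
\[
S=\begin{pmatrix} S_{11} & S_{12}\\ 0 & S_{22}\end{pmatrix},
\]
where $S_{11}$ is the leading $s\times s$ upper-triangular block carrying $\lambda_1=1,\lambda_2,\dots,\lambda_s$ and $S_{22}$ carries $\lambda_{s+1},\dots,\lambda_n$, so that $\rho(S_{22})=|\lambda_{s+1}|$ and $S_{11}$ is invertible (since $|\lambda_s|\ge|\lambda_{s+1}|$ and, generically, $|\lambda_s|>|\lambda_{s+1}|$, which I would assume as the standing gap hypothesis). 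First I would argue that, after $k$ iterations, the column space of $S^k X_0$ is close to $\operatorname{span}\{I^{(1)},\dots,I^{(s)}\}$: writing $X_0=\begin{pmatrix}X_0^{\mathrm{top}}\\ X_0^{\mathrm{bot}}\end{pmatrix}$ with $X_0^{\mathrm{top}}$ invertible (the genericity assumption on the initial block), the ``tangent'' representation $S^k X_0 (X_0^{\mathrm{top}})^{-1} S_{11}^{-k}$ has top block $I_s$ and bottom block $E_k$ satisfying $\|E_k\|=O\!\big((|\lambda_{s+1}|/|\lambda_s|)^k\big)$ by the standard subspace-iteration estimate (this is the content of Theorem 3.2 of \cite{stewart1976simultaneous}, specialized here).

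Next I would feed this into the Schur--Rayleigh--Ritz step. Let $Q_k$ be the orthonormal factor of $S^k X_0$; by the previous paragraph $Q_k=\begin{pmatrix} I_s\\ 0\end{pmatrix}+O(\|E_k\|)$ after an appropriate orthogonal change of basis in the $s$-dimensional coordinate block. The Rayleigh quotient $B_k=Q_k^{T}SQ_k$ is then an $O(\|E_k\|)$ perturbation of $S_{11}$, which is upper triangular with $1$ in the $(1,1)$ entry as its eigenvalue of largest magnitude. Performing the ordered Schur decomposition $B_k=U_kT_kU_k^{T}$ and taking the first Schur vector $U_k^{(1)}$, perturbation theory for the (simple, well-separated) dominant eigenvalue $\lambda_1=1$ of $S_{11}$ gives $U_k^{(1)}=e_1+O(\|E_k\|)$ within the $s$-dimensional block. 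Hence $\widetilde Y_k^{(1)}=Q_kU_k^{(1)}=\begin{pmatrix}e_1\\0\end{pmatrix}+O(\|E_k\|)=I^{(1)}+O(\|E_k\|)$, and therefore
\[
\operatorname{dist}\!\left(\operatorname{span}\{\widetilde Y_k^{(1)}\},\operatorname{span}\{I^{(1)}\}\right)=O(\|E_k\|)=O\!\left(\left(\tfrac{|\lambda_{s+1}|}{|\lambda_s|}\right)^{k}\right).
\]
To obtain the stated bound $O(|\lambda_{s+1}|^k)$, I would note that in the Markov setting $\lambda_s$ satisfies $|\lambda_s|\le 1$, so $(|\lambda_{s+1}|/|\lambda_s|)^k\le |\lambda_{s+1}|^k \cdot |\lambda_s|^{-k}$; more cleanly, since we are only tracking the first coordinate direction and $\lambda_1=1$, the relevant ratio governing the first column of the Ritz approximation is $|\lambda_{s+1}|/|\lambda_1|=|\lambda_{s+1}|$, giving exactly $O(|\lambda_{s+1}|^k)$ — I would make this precise by carrying out the $E_k$ estimate with $S_{11}^{-k}$ replaced by the rescaling that isolates the $\lambda_1=1$ row, i.e.\ tracking $I^{(1)T}S^kX_0$ versus the rest.

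The main obstacle I anticipate is the bookkeeping around non-Hermitian, non-diagonalizable $S$: because $S$ is only upper triangular (possible Jordan blocks, in particular among $\lambda_{s+1},\dots,\lambda_n$), the bound $\|S_{22}^k\|=O(|\lambda_{s+1}|^k)$ is not literally true — one gets an extra polynomial factor $k^{m-1}$ where $m$ is the size of the largest Jordan block associated with $|\lambda_{s+1}|$. The clean way around this is to invoke the standard fact that for any $\varepsilon>0$ there is a norm (equivalently a constant $C_\varepsilon$) with $\|S_{22}^k\|\le C_\varepsilon(|\lambda_{s+1}|+\varepsilon)^k$, and to state the conclusion as $O((|\lambda_{s+1}|+\varepsilon)^k)$ for every $\varepsilon>0$, which is what ``$O(|\lambda_{s+1}|^k)$'' should be read to mean here (and is exactly the convention in \cite{stewart1976simultaneous}). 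A secondary, minor obstacle is justifying the genericity assumptions (invertibility of the leading block of $X_0$ and simplicity of $\lambda_1$ in the projected problem); the first is the usual subspace-iteration caveat that the initial block not be deficient in the dominant directions, and the second follows from irreducibility/aperiodicity of the Markov chain, which forces $\lambda_1=1$ to be simple and strictly dominant so that the Schur ordering picks it out unambiguously for all $k$ large enough.
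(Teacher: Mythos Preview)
The paper does not actually prove Lemma~\ref{lemma1}: it simply declares the lemma to be a special case of Theorem~3.2 in Stewart~\cite{stewart1976simultaneous} and moves on. Your proposal, by contrast, sketches a direct argument from the block structure of $S$, so the two are not really comparable as proofs; you are attempting to supply what the paper outsources.

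Your sketch is sound through the subspace-level estimate: partitioning $S$, normalizing by $(X_0^{\mathrm{top}})^{-1}S_{11}^{-k}$, and obtaining $\|E_k\|=O\big((|\lambda_{s+1}|/|\lambda_s|)^k\big)$ is exactly the classical subspace-iteration bound, and your perturbation chain $Q_k\to B_k\to U_k^{(1)}\to \widetilde Y_k^{(1)}$ is correct. The Jordan-block caveat and the $\varepsilon$-convention are also handled appropriately.

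The weak point is precisely the step where you upgrade the rate from $(|\lambda_{s+1}|/|\lambda_s|)^k$ to $|\lambda_{s+1}|^k$. Your first remark (``$|\lambda_s|\le 1$'') goes the wrong way and gives nothing. Your second remark---that the relevant ratio for the \emph{first} Ritz vector is $|\lambda_{s+1}|/|\lambda_1|$---is the correct statement, but the perturbation argument you wrote down does \emph{not} deliver it: every line in your chain is controlled only by $\|E_k\|$, so the output is still $O((|\lambda_{s+1}|/|\lambda_s|)^k)$. Obtaining the sharper individual-vector rate $|\lambda_{s+1}|^k/|\lambda_p|^k$ for the $p$th Schur--Ritz vector is exactly the nontrivial content of Stewart's Theorem~3.2; it requires a more delicate decomposition that separates the action on the $\lambda_1$-direction from the rest of $S_{11}$, rather than treating $B_k$ as a generic $O(\|E_k\|)$-perturbation of $S_{11}$. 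Your parenthetical ``I would make this precise by\ldots isolating the $\lambda_1=1$ row'' points at the right mechanism but is not yet an argument. If you want a self-contained proof, that is the step to flesh out; otherwise, invoking Stewart's theorem at that point---as the paper does wholesale---is the honest shortcut.
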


\begin{theorem}
\label{theorem1}
Let $X_0$ be an $n\times s$ initial block matrix and let $Y_k$ be the Schur--Rayleigh--Ritz approximation corresponding to $(P^T)^k X_0$. Assuming that the Schur decomposition of $B_k$ results in an upper triangular matrix $T_k$ with all diagonal elements sorted in non--increasing order by their magnitudes, then vector $Y_k^{(1)}$ converges to the left dominant eigenvector $v_1$ of $P$ at a rate that is $O(|\lambda_{s+1} |^k )$, such that
$$\emph{dist}\left( span \left\lbrace Y_k^{(1)} \right\rbrace,span \left\lbrace v_1 \right\rbrace  \right)=O(|\lambda_{s+1}|^k ).$$
\end{theorem}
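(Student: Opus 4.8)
The plan is to reduce Theorem~\ref{theorem1} to Lemma~\ref{lemma1} by means of the Schur decomposition of $P$. First I would write the Schur factorization $P^T = W S W^*$, where $W$ is unitary and $S$ is upper triangular with diagonal entries $\lambda_1 = 1, \lambda_2, \dots, \lambda_n$ arranged in non-increasing order of magnitude; such an ordering is always achievable. The first column $w_1 = W^{(1)}$ is then the Schur vector associated with $\lambda_1$, i.e. a unit left dominant eigenvector of $P$, so $\operatorname{span}\{w_1\} = \operatorname{span}\{v_1\}$. The key observation is that the whole construction — subspace iteration followed by the Schur--Rayleigh--Ritz step — is equivariant under the unitary change of coordinates $x \mapsto W^* x$: if we set $\widehat{X}_0 = W^* X_0$, then $S^k \widehat{X}_0 = W^* (P^T)^k X_0$, and since QR factorization, the Rayleigh projection onto the column space, and the Schur decomposition of the projected matrix all commute with conjugation by a fixed unitary, the Schur--Rayleigh--Ritz approximation $\widetilde{Y}_k$ built from $S^k \widehat{X}_0$ satisfies $Y_k = W \widetilde{Y}_k$, column by column. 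In particular $Y_k^{(1)} = W \widetilde{Y}_k^{(1)}$.

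Next I would invoke the fact that the distance between one-dimensional subspaces (the sine of the principal angle, or equivalently $\|P_{\mathcal{U}} - P_{\mathcal{V}}\|$) is invariant under multiplication of both spaces by the same unitary matrix $W$. Hence
$$\operatorname{dist}\left(\operatorname{span}\{Y_k^{(1)}\}, \operatorname{span}\{v_1\}\right) = \operatorname{dist}\left(\operatorname{span}\{W\widetilde{Y}_k^{(1)}\}, \operatorname{span}\{W w_1\}\right) = \operatorname{dist}\left(\operatorname{span}\{\widetilde{Y}_k^{(1)}\}, \operatorname{span}\{w_1\}\right).$$
Since $w_1 = W^*(W e_1) = e_1 = I^{(1)}$ in the rotated coordinates — that is, the Schur vector of $S$ for $\lambda_1$ is exactly the first standard basis vector because $S$ is already upper triangular with $\lambda_1$ in the $(1,1)$ entry — the right-hand side is precisely the quantity estimated in Lemma~\ref{lemma1} with the initial block $\widehat{X}_0 = W^* X_0$. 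Applying Lemma~\ref{lemma1} gives $\operatorname{dist}(\operatorname{span}\{\widetilde{Y}_k^{(1)}\}, \operatorname{span}\{I^{(1)}\}) = O(|\lambda_{s+1}|^k)$, and the theorem follows.

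The main obstacle I anticipate is justifying the equivariance claim rigorously, in particular that the Schur--Rayleigh--Ritz step really does intertwine with conjugation by $W$. Two subtleties must be handled: (i) the QR factorization is only unique up to a diagonal unitary (phase) factor on the columns of $Q_k$, so one must check that this ambiguity propagates harmlessly through the Rayleigh projection $B_k = Q_k^* P^T Q_k$ and cancels in $Y_k = Q_k U_k$; and (ii) the Schur decomposition of $B_k$ is likewise non-unique, so the claim "the diagonal of $T_k$ is sorted in non-increasing order of magnitude" must be read as a choice made consistently on both sides, and one should assume, as is implicit in the statement, that $|\lambda_{s+1}|$ is strictly separated from the $s$th eigenvalue encountered so that the relevant Ritz value is well-defined and the first Schur vector is stable. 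Once these bookkeeping points are settled, the reduction to Lemma~\ref{lemma1} is essentially formal, since all the analytic content — the $O(|\lambda_{s+1}|^k)$ decay driven by the gap after the $s$th eigenvalue — is already contained in the lemma.
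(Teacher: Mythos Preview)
Your proposal is correct and follows essentially the same route as the paper: conjugate by the unitary $W$ from the Schur decomposition $P^T = WSW^*$, verify that the Schur--Rayleigh--Ritz construction is equivariant under this change of basis (the paper does this explicitly via $\widetilde{Q}_k = W^{-1}Q_k$ and $B_k = \widetilde{Q}_k^T S \widetilde{Q}_k$), and then invoke Lemma~\ref{lemma1} on the triangular problem. Your added caveats about the non-uniqueness of QR and Schur factorizations are well taken---the paper glosses over them---but note a small slip in your displayed chain: $\operatorname{span}\{v_1\} = \operatorname{span}\{w_1\} = \operatorname{span}\{W e_1\}$, so the middle term should read $\operatorname{span}\{W e_1\}$ rather than $\operatorname{span}\{W w_1\}$, after which $W^*$ carries it to $\operatorname{span}\{e_1\} = \operatorname{span}\{I^{(1)}\}$ as you intend.
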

\begin{proof}
Suppose that $WSW^{-1}=P^T$ is a Schur decomposition of $P^T$, where $W$ is a unitary matrix consisting of the Schur vectors and the Schur form $S$ is an upper triangular matrix with its diagonal elements as $\lambda_1=1,\lambda_2,\lambda_3,\cdots,\lambda_n$ placed in non--increasing order according to their magnitudes. Then the projection of $(P^T)^k$ onto $X_0$ becomes
\begin{eqnarray*}X_k&=&(P^T)^k X_0=(WSW^{-1})^k X_0 \\
&=& WS^k W^{-1} X_0 \\
&=& WS^k \widetilde{X}_0,
\end{eqnarray*}
where $\widetilde{X}_0$ denotes $W^{-1} X_0$.

Let $Q_k R_k$ be a QR decomposition of $X_k$ and then we can get $W^{-1} Q_k R_k=S^k \widetilde{X}_0$. Because both $W^{-1}$ and $Q_k$ are orthogonal matrices, denoting $\widetilde{Q}_k=W^{-1} Q_k$, we can find that $\widetilde{Q}_k$ is a basis for the range of $S^k \widetilde{X}_0$.

Afterward, considering projecting $P^T$ onto $Q_k$ to construct an $s\times s$ matrix $B_k$, we have
$$B_k=Q_k^T P^T Q_k=\widetilde{Q}_k^T W^{-1} WSW^{-1} W\widetilde{Q}_k=\widetilde{Q}_k^T S\widetilde{Q}_k.$$
Clearly, the Schur vectors $U_k$ of $B_k$ are also the Schur vectors of $\widetilde{Q}_k^T S\widetilde{Q}_k$. Let $\widetilde{Y}_k$ denote the Schur--Rayleigh--Ritz approximation corresponding to $S^k \widetilde{X}_0$. Then, the Schur--Rayleigh--Ritz approximation $Y_k$ to $X_k$ can be expressed as
$$Y_k=Q_k U_k=W\widetilde{Q}_k U_k=W\widetilde{Y}_k.$$
Based on the assumption that all diagonal elements in $T_k$ are sorted in non--increasing order by their magnitudes and according to Lemma \ref{lemma1}, we have 

$$\text{dist}\left( span \left\lbrace \widetilde{Y}_k^{(1)} \right\rbrace,span \left\lbrace I^{(1)} \right\rbrace  \right)=O(|\lambda_{s+1}|^k )$$
and thus
$$\text{dist}\left( span \left\lbrace Y_k^{(1)} \right\rbrace,span \left\lbrace W^{(1)} \right\rbrace  \right)=O(|\lambda_{s+1}|^k ).$$

Since $\lambda_1=1,\lambda_2,\lambda_3,\cdots,\lambda_n$ are placed in non--increasing order by their magnitudes in $S$, $W^{(1)}$ is exactly the dominant left eigenvector $v_1$ of matrix $P$. Hence, we can conclude that $Y_k^{(1)}$ converges to the dominant left eigenvector of matrix $P$ at a rate of $O(|\lambda_{s+1} |^k )$.  \\
\end{proof}

Theorem \ref{theorem1} indicates that in the block power method, when $s$ linearly independent probability vectors are evolved simultaneously under Markov transitions, correlating these vectors can lead to a faster approximation of the dominant eigenvector. The convergence rate, instead of the well--known one related to $|\lambda_2|$ in the fundamental Markov chain theory, now depends on the $(s+1)$th dominant eigenvalue $|\lambda_{s+1} |$ of $P$. An intuitive explanation of the block power method is that the block power iteration is able to quickly remove the influence from the eigenvalues whose magnitudes are smaller than  $|\lambda_{s}|$ and afterwards the Schur--Rayleigh--Ritz step is used as a direct method carried out on the resulted block matrix to rapidly extract the approximated dominant eigenvector. Therefore, the block power method is particularly powerful for Markov chains where $|\lambda_{s+1} |$ and $1$ are well separated but $|\lambda_2 | $ and $1$ are not. 

Theorem \ref{theorem1} assumes that the upper triangular matrix $T_k$ generated in Schur decomposition of $B_k$ has all diagonal elements sorted in non--increasing order by their magnitudes. However, this is not always guaranteed in Schur decomposition in practice. Therefore, instead, eigendecomposition is applied on $B_k$ to obtain the Ritz pairs to approximate the eigenvalue/eigenvector pairs:\\
\begin{tabbing}
\hspace*{1.5cm}\= \hspace*{4cm} \= \kill 
	\>$V_k \Lambda_k V_k^{-1}=B_k$	\>/* Eigendecomposition */\\
	\>$Y_k=Q_k V_k. $  \\
\end{tabbing}
The Ritz vector corresponding to the largest Ritz value is then used as the approximate dominant eigenvector of $P^T$. 

In many large--scale Markov chain applications, the transition matrix $P$ is either stored outside of the system memory or the elements of $P$ need to be regenerated at each use \cite{hy16bfbcg,ji2015monte,ji2016spark}. As a result, passing over the elements of $P$ becomes the main computational bottleneck, which is much more time-consuming than the floating point operations of matrix--vector multiplications and matrix decompositions. In this case, the cost of multiplying a large matrix with a skinny block matrix is not much greater than that of multiplying it with a single vector. The block power method is hence particularly attractive due to its capability of reducing the total number of iterations, i.e., the overall number of times  passing over $P$, as shown in Theorem \ref{theorem1}. Moreover, the multiplication of $P$ with the block matrix can be implemented with high parallel efficiency: in loosely coupled systems these matrix--vector multiplications can be carried out independently while in tightly coupled systems the space locality of the elements in the block matrix can be taken advantage to enhance cache efficiency.

\subsection{A simple illustrative example}
\label{subsec:2.3}

We use the following simple example to illustrate how the block power method can remove the influence from the eigenvalues whose magnitudes are smaller than  $|\lambda_{s}|$. FIG. \ref{fig:fig1} shows the transition diagram of an irreducible Markov chain with five states $\{1,2,3,4,5\}$. The corresponding transition matrix $P$ of the Markov chain shown in FIG. \ref{fig:fig1} is
\[
  P =
  \left[ {\begin{array}{ccccccccc}
   0        ~&~ 0       ~&~ 0      ~&~ 0.9090 ~&~ 0.0910\\
   0.0002   ~&~ 0       ~&~ 0.9998 ~&~ 0      ~&~ 0\\
   0        ~&~ 0.9998  ~&~ 0      ~&~ 0.0002 ~&~ 0\\
   0.6690   ~&~ 0       ~&~ 0      ~&~ 0      ~&~ 0.3310\\
   0.9989   ~&~ 0.0011  ~&~ 0      ~&~ 0      ~&~ 0\\
  \end{array} } \right].
\]

\begin{figure}[!htbp]
\centering
\includegraphics[scale=0.5]{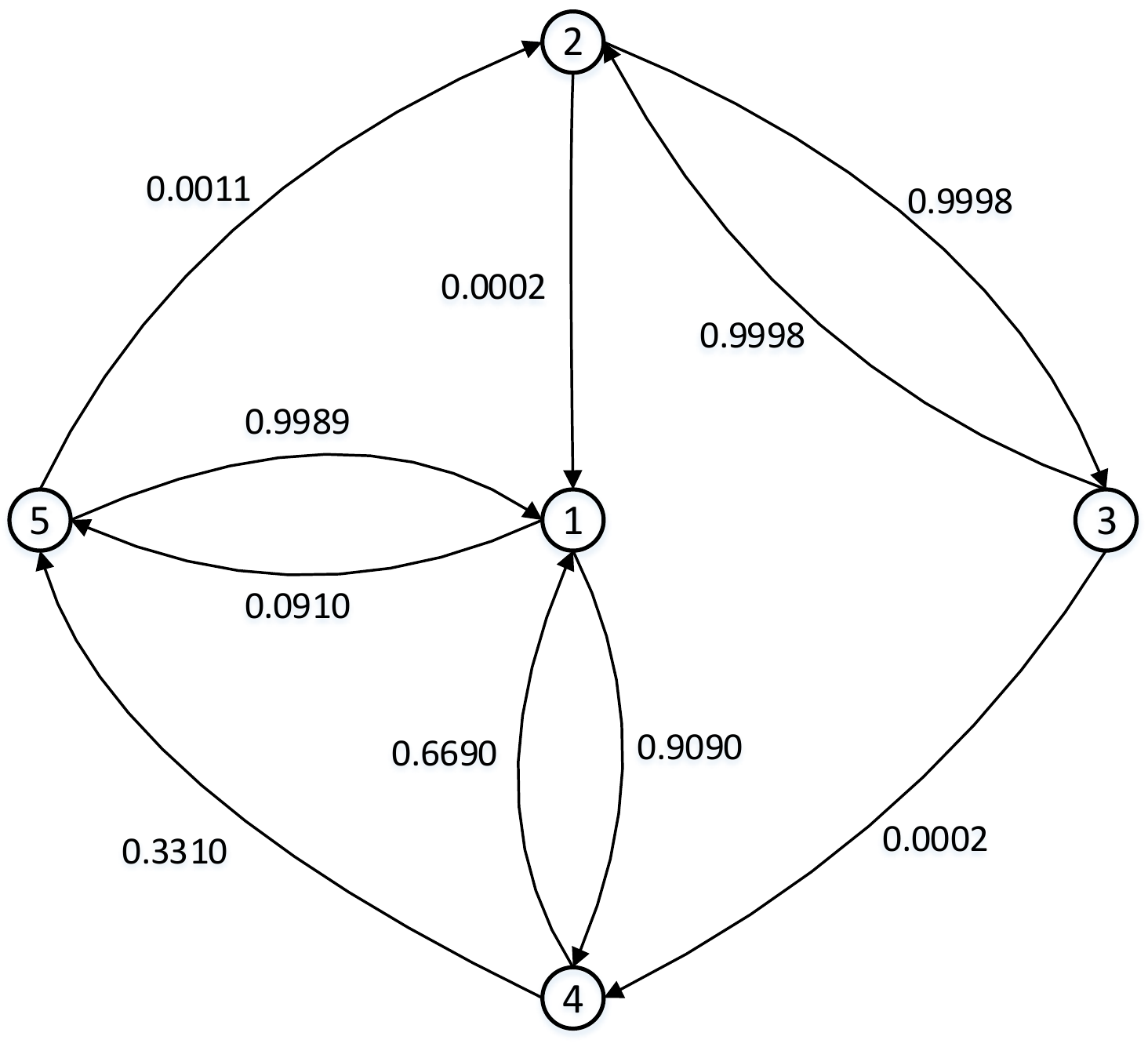}
\caption{The transition diagram of an irreducible Markov chain with five states.}
\label{fig:fig1}       
\end{figure}

\begin{figure}[!htbp]
\centering
\includegraphics[scale=0.25]{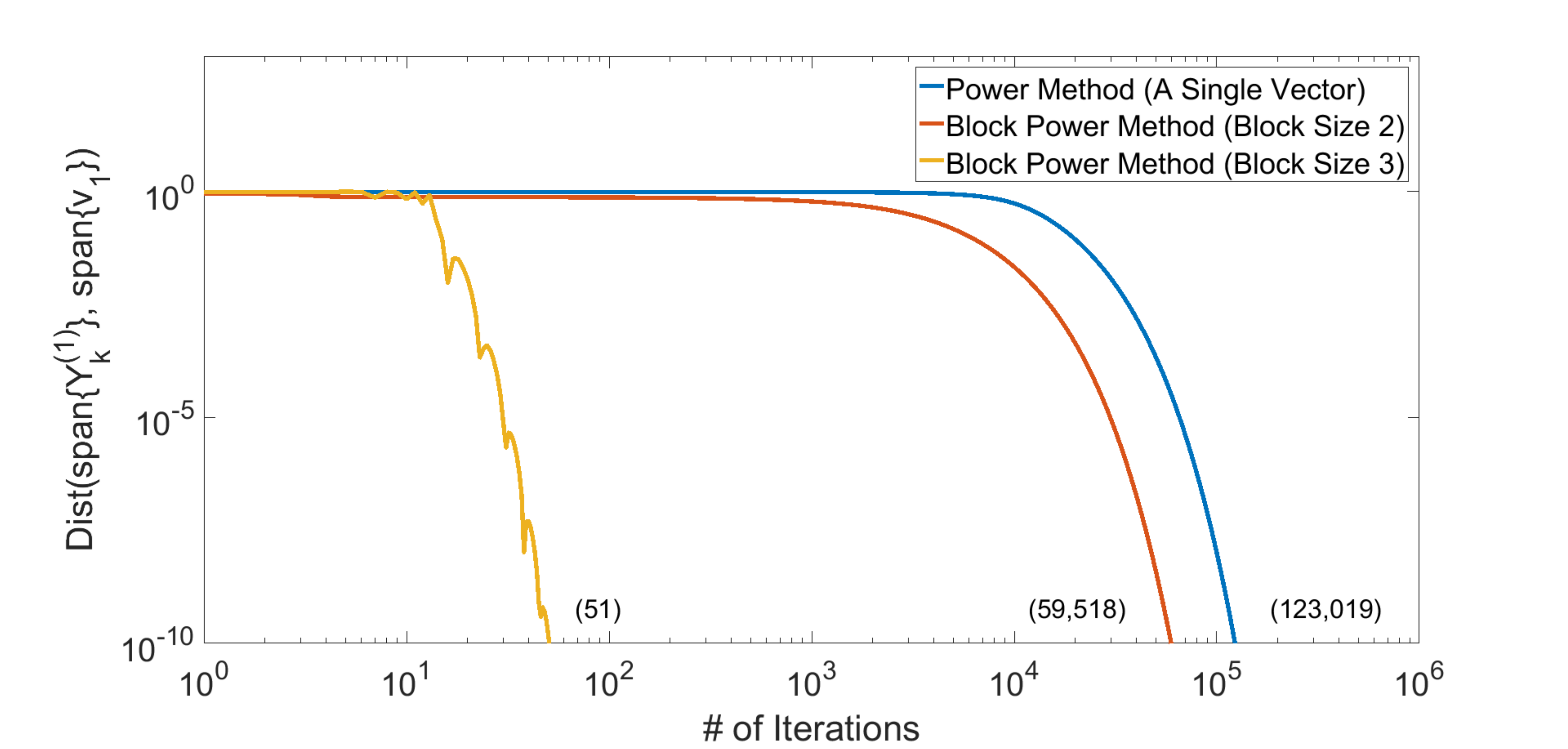}
\caption{Convergence comparison of the power method and the block power method (block size $2$ and $3$) in terms of number of iteration steps for the model shown in FIG. \ref{fig:fig1}.}
\label{fig:fig2}       
\end{figure}

FIG. \ref{fig:fig2} compares the convergence of using the power method as well as the block power method with block sizes $2$ and $3$ to compute the stationary distribution of the Markov chain in this example. In this example, the power method takes $123,019$ iteration steps to converge to a solution with accuracy $10^{-10}$, while the block power method with block size $2$ takes $59,518$ steps and the one with block size $3$ takes only $51$ steps. This is due to the fact that in the transition matrix of this Markov chain example, the gaps between the dominant eigenvalue ($|\lambda_1 |=1$) and the $2$nd and $3$rd dominant eigenvalues ($|\lambda_2 |=0.9998$, $|\lambda_3 |=0.9996$) are small, but the $4$th dominant eigenvalue ($|\lambda_4 |=0.5483$) is well separated from $|\lambda_1 |$. As a result, a few block power iterations allow the $4$th dominant and the smaller eigenvalues of the power matrix to quickly approach $0$. Consequently, the block power method with block size $3$ converges substantially faster than the simple power method as well as the block power method with block size $2$. This is consistent with our analysis in Theorem \ref{theorem1}. 

\subsection{Stopping criteria}
\label{subsec:2.4}

In practice, because the true dominant eigenvector $v_1$ is unknown, the theoretical measure $\text{dist}\left( span \left\lbrace Y_k^{(1)} \right\rbrace,span \left\lbrace v_1 \right\rbrace  \right)$ stated in Theorem \ref{theorem1} cannot be used to determine whether the block power method has reached convergence within certain accuracy or not during block power iterations. Instead, by taking advantage of the fact that the dominant eigenvalue $\lambda_1$ is known to be $1$ in Markov chain transition matrices, we use the error norm $\|P^T v-v\|_2$ as an indicator to estimate how well the computed vector $v$ approximates the actual dominant eigenvector.
To find an optimal vector that minimizes the approximation error $\|P^T v-v\|_2$ over the space spanned by $Q_k$ at the $k$th iteration, we construct a matrix $B_k$ in the following form 
$$B_k=Q_k^T (P^T-I)^T (P^T-I) Q_k.$$
Correspondingly, the decomposition step of the block power method is modified as below:\\
\begin{tabbing}
\hspace*{1.5cm}\= \hspace*{4cm} \= \kill 
	\>$Q_k R_k=W_k	$ \>/* QR Decomposition */\\
	\>$Z_k=(P^T-I)Q_k $ \>/* Projection */\\
	\>$B_k=Z_k^T Z_k$ \\
	\>$V_k \Lambda_k V_k^{-1}=B_k$ \> /* Eigendecomposition */\\
	\>$Y_k=Q_k V_k.$ \\
\end{tabbing}

It is important to note that in this modified decomposition step, the Ritz vector corresponding to the smallest Ritz value in $B_k$ is employed to approximate the left dominant eigenvector of $P$, rather than the one described in Section \ref{subsec:2.2} using the largest Ritz value. Here $B_k$ is a small, square matrix, and thus the Ritz vector with the smallest Ritz value can be easily obtained by performing eigendeomposition on $B_k$. The minimization property of this decomposition step along the block power iterations is proved in Proposition \ref{proposition1}.

\begin{proposition}
\label{proposition1}
Suppose that $Q$ is an $n\times s$ matrix, then
$$v^{*} = \operatorname*{arg\,max}_{v \in ran(Q),\|v\|_2=1 } \|P^T v-v\|_2 $$
and $v^{*}=Q\widetilde{v}_k$ is the minimizer, where $\widetilde{v}_k$ is the eigenvector corresponding to the smallest eigenvalue of matrix $Q^T (P^T-I)^T (P^T-I)Q$ and $I$ is an identity matrix. 
\end{proposition}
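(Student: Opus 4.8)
The plan is to reduce the optimization over the subspace $\mathrm{ran}(Q)$ to an ordinary Rayleigh-quotient minimization in $s$ variables and then read off the answer from the spectral decomposition of the small symmetric matrix $B := Q^{T}(P^{T}-I)^{T}(P^{T}-I)Q$. Throughout I take $Q$ to have orthonormal columns, as is the case for the matrix $Q_k$ produced by the QR step in Section \ref{subsec:2.4}; then its columns form an orthonormal basis of $\mathrm{ran}(Q)$, so every $v\in\mathrm{ran}(Q)$ can be written uniquely as $v=Qy$ with $y\in\mathbb{R}^{s}$, and $\|v\|_2^2 = y^{T}Q^{T}Qy = \|y\|_2^2$. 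Consequently the constraint $\|v\|_2=1$ is equivalent to $\|y\|_2=1$, i.e.\ the feasible set is the image under $Q$ of the unit sphere in $\mathbb{R}^{s}$. (I prove that $v^{*}$ \emph{minimizes} the residual $\|P^{T}v-v\|_2$, in keeping with the word ``minimizer'' and with the objective stated in Section \ref{subsec:2.4}.)

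Substituting $v=Qy$ into the objective gives $\|P^{T}v-v\|_2^2 = \|(P^{T}-I)Qy\|_2^2 = y^{T}By$, where $B$ is exactly the matrix assembled in the modified decomposition step. Two structural facts are needed: $B$ is symmetric, and it is positive semidefinite because $y^{T}By=\|(P^{T}-I)Qy\|_2^2\ge 0$ for all $y$. Hence $B$ admits real eigenvalues $0\le\mu_1\le\mu_2\le\cdots\le\mu_s$ with an associated orthonormal eigenbasis $u_1,\dots,u_s$.

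It then remains to solve $\min_{\|y\|_2=1} y^{T}By$. Writing a unit vector $y=\sum_{j=1}^{s}c_j u_j$ with $\sum_j c_j^2=1$, one gets $y^{T}By=\sum_{j=1}^{s}\mu_j c_j^2 \ge \mu_1$, with equality attained at $y=u_1$; this is precisely the Courant–Fischer bound for the Rayleigh quotient of $B$. Taking $\widetilde v_k=u_1$, the eigenvector for the smallest eigenvalue of $B$, and $v^{*}=Q\widetilde v_k$, we obtain $\|P^{T}v^{*}-v^{*}\|_2^2=\mu_1=\min_{v\in\mathrm{ran}(Q),\,\|v\|_2=1}\|P^{T}v-v\|_2^2$; since $t\mapsto\sqrt t$ is increasing on $[0,\infty)$, the same $v^{*}$ also minimizes the residual norm itself, which is the assertion of the proposition.

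There is no genuinely difficult step here — the argument is a standard reduction — and the two things worth being careful about are: (i) making explicit the orthonormality of $Q$, which is what yields $\|Qy\|_2=\|y\|_2$ and keeps the reduced problem a \emph{standard} eigenvalue problem (for an arbitrary full-column-rank $Q$ one would instead face the pencil $By=\mu\,(Q^{T}Q)\,y$ and would have to take the smallest generalized eigenvalue); and (ii) noting that the extremal eigenpair of interest is the one with the \emph{smallest} eigenvalue, which matches the ``smallest Ritz value'' prescription of Section \ref{subsec:2.4} and contrasts with the largest-Ritz-value rule used in Section \ref{subsec:2.2}.
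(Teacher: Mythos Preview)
Your proof is correct and follows essentially the same route as the paper: parametrize $v=Qy$, rewrite $\|P^{T}v-v\|_2^2$ as the quadratic form $y^{T}By$ with $B=Q^{T}(P^{T}-I)^{T}(P^{T}-I)Q$, observe $B$ is symmetric positive-semidefinite, and invoke Courant--Fischer to identify the minimizer as the eigenvector for the smallest eigenvalue. You are in fact a bit more careful than the paper in making explicit the orthonormality of $Q$ (needed for $\|v\|_2=\|y\|_2$) and in spelling out the Rayleigh-quotient argument via the eigenbasis expansion, but the underlying argument is the same.
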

\begin{proof}
As $v \in ran(Q)$ and $\|v\|_2=1$, there exists a nonzero vector $y\in R^k$ such that $v=Qy$ and $y^T y=1$.
Then,
\begin{eqnarray*}
\|P^T v-v\|_2^2 &=& (P^T v-v)^T (P^T v-v) \\
&=& v^T (P^T-I)^T (P^T-I)v  \\
&=& y^T Q^T (P^T-I)^T (P^T-I)Qy.
\end{eqnarray*}
Clearly, $Q^T (P^T-I)^T (P^T-I)Q$ is an $s\times s$ symmetric positive--semidefinite matrix. Let $\widetilde{v}_k$ denote its eigenvector corresponding to the smallest eigenvalue. Hence, according to the Courant--Fischer Theorem \cite{golub2012matrix},  $v^{*}=Q\widetilde{v}_k$ is the minimizer of $\|P^T v-v\|_2^2$ over the column space of $Q$ and $\|v^{*}\|_2=1$. 
\end{proof}

\subsection{A sliding window scheme}
\label{subsec:2.5}

Although the block power method can typically converge in fewer iterations than the simple power method, each block power iteration requires $s$ matrix--vector multiplications. The memory requirement of the block power method is also higher than that of the simple power method, due to the storage of the block matrix. Here, we design a sliding window scheme during the power iterations to construct the block matrix.

The fundamental idea of the sliding window scheme is to take advantage of $t$ intermediate subsequent vectors in power iterations to form the multi--dimensional invariant subspace as follows,\\\\
\hspace*{1.5cm}\textbf{for} $i = 1, 2, \cdots, k,\cdots$ \\
\hspace*{2.0cm}$x_i=P^T x_{i-1}$ \\
\hspace*{1.5cm}\textbf{end for}\\
\hspace*{1.5cm}$W_k=\left[x_{k-t+1}, x_{k-t+2}, \cdots, x_k\right].$\\\\
Correlating the vectors in the sliding window can remove the influence from the eigenvalues whose magnitudes are smaller than $|\lambda_{t}|$ and thus the eigendecomposition step on $W_k$ is able to fast reveal the dominant eigenvector. Therefore, ideally, the sliding window scheme is expected to keep the fast convergence capability as the block power method but avoid multiplying $P$ with a block matrix at each iteration step. 

More generally, the sliding window scheme can also be applied to the block power method to build up bigger blocks without adding additional matrix-vector multiplications, i.e.,\\\\
\hspace*{1.5cm}\textbf{for} $i = 1, 2, \cdots, k,\cdots$ \\
\hspace*{2.0cm}$X_i=P^T X_{i-1}$ \\
\hspace*{1.5cm}\textbf{end for}\\
\hspace*{1.5cm}$W_k=\left[X_{k-t+1},X_{k-t+2},\cdots,X_k\right].$\\\\
where $t\times s$ denotes the size of sliding window $W_k$ and, as before, $X_0$ is an $n \times s$ initial block matrix. In fact, the sliding window matrix represents a truncated Krylov subspace based on $X_0$, i.e.,
\begin{eqnarray*}W_k &=& \left[ X_{k-t+1},X_{k-t+2},\cdots,x_k \right] \\
&=& \left[ (P^T)^{k-t+1} X_0, (P^T)^{k-t+2} X_0, \cdots,(P^T)^{k} X_0 \right].
\end{eqnarray*}
Then, the eigendecomposition step can be carried out accordingly on  $W_k^T P^T W_k$ to obtain the Ritz pairs and the approximate dominate eigenvector can be extracted accordingly: \\
\begin{tabbing}
\hspace*{1.5cm}\= \hspace*{4cm} \= \kill 
	\>$Q_k R_k=W_k$	\>/* QR Decomposition */ \\ 
	\>$B_k=Q_k^T P^T Q_k$ \>/* Projection */ \\
	\>$V_k \Lambda_k V_k^{-1}=B_k$ \>/* Eigendecomposition */\\
	\>$Y_k=Q_k V_k.$\\
\end{tabbing}
When the sliding window scheme is incorporated, the block power method is expected to yield a faster convergence, since a bigger block size is used. However, due to the fact that these intermediate vectors in the truncated Krylov subspace are highly correlated, the convergence rate of the block power method with the sliding window scheme depends on the actual rank of $W_k$. In fact, the convergence speed of block power method with sliding window size $t$ is governed by a factor lying between $|\lambda_{t\times s+1} |^k$ and $|\lambda_{s+1} |^k$. In the best case, the block power method with sliding window will have convergence rate related to $\lambda_{t\times s+1}$. However, if the rank of $W_k$ is $s$, the performance of the block power method with sliding window is reduced to the block power method with block size $s$. Nevertheless, if this worse case actually occurs, the block power iteration should have already converged. The sliding window scheme demands additional storage for the subsequent vectors and higher computational cost for decomposing larger block matrices, but without additional rounds of passing over $P$ or additional matrix--vector multiplications. As a result, in practice, the sliding window scheme can usually enhance the computational efficiency of the power method and the block power method, which we will show by examples in the next section.

\section{Numerical results}
\label{sec:3}

In this section, we present numerical results on the calculation of the stationary distribution of a Markov chain model of a stochastic lumincal calcium release site \cite{hys2016cal}. See \cite{hys2016cal} for details of the model. The calcium release site model consists of $N$ calcium--regulated calcium ion channels, in which each channel is represented by a four--state model (i.e., closed, inactivated, closed--inactivated, and open states).  The Markov chain representing the complete calcium release site is composed of all possible calcium channel state configurations and possible values for the discrete number of calcium ions present at the release site.  We have previously used this model to show that fluctuations in calcium ion concentration can alter the stationary properties of calcium release site channel opening and closing \cite{hys2016cal}. Depending on the number of calcium ion channels and model parameters, the corresponding Markov chain transition matrices vary in sizes and relative eigenvalue magnitudes, which provides nice scenarios with different patterns to analyze the convergence of the simple power method, the block power method, and the sliding window scheme. 

\subsection{Calcium release site with two calcium channels}
\label{subsec:3.1}

We first apply the block power method to compute the dominant eigenvector of a $109,690 \times 109,690$ Markov transition matrix (Ca--$N2$) derived from a model with two calcium channels. FIG. \ref{fig:fig3} compares the convergence of the block power method with block sizes ranging from $1$ to $128$. The desired accuracy is set to $10^{-10}$. Eigendecomposition on the block matrix is carried out at every $10,000$ iteration steps to extract the approximate dominant eigenvector and to estimate convergence. One can find that a larger block size can lead to fewer iteration steps to reach the desired accuracy, which agrees with our theoretical analysis on the block power method. In particular, when the block size is $1$, the block power method is equivalent to the simple power method, which takes $50,910,000$ iteration steps to reach the desired accuracy. When a block size of $16$ is used, the total number of iterations of the block power method is reduced to $250,000$, which is approximately $1/200$ of that of the simple power method. The block power method with block size $128$ further brings the total number of iteration steps down to $100,000$. 

\begin{figure}[!htbp]
\centering
\includegraphics[scale=0.25]{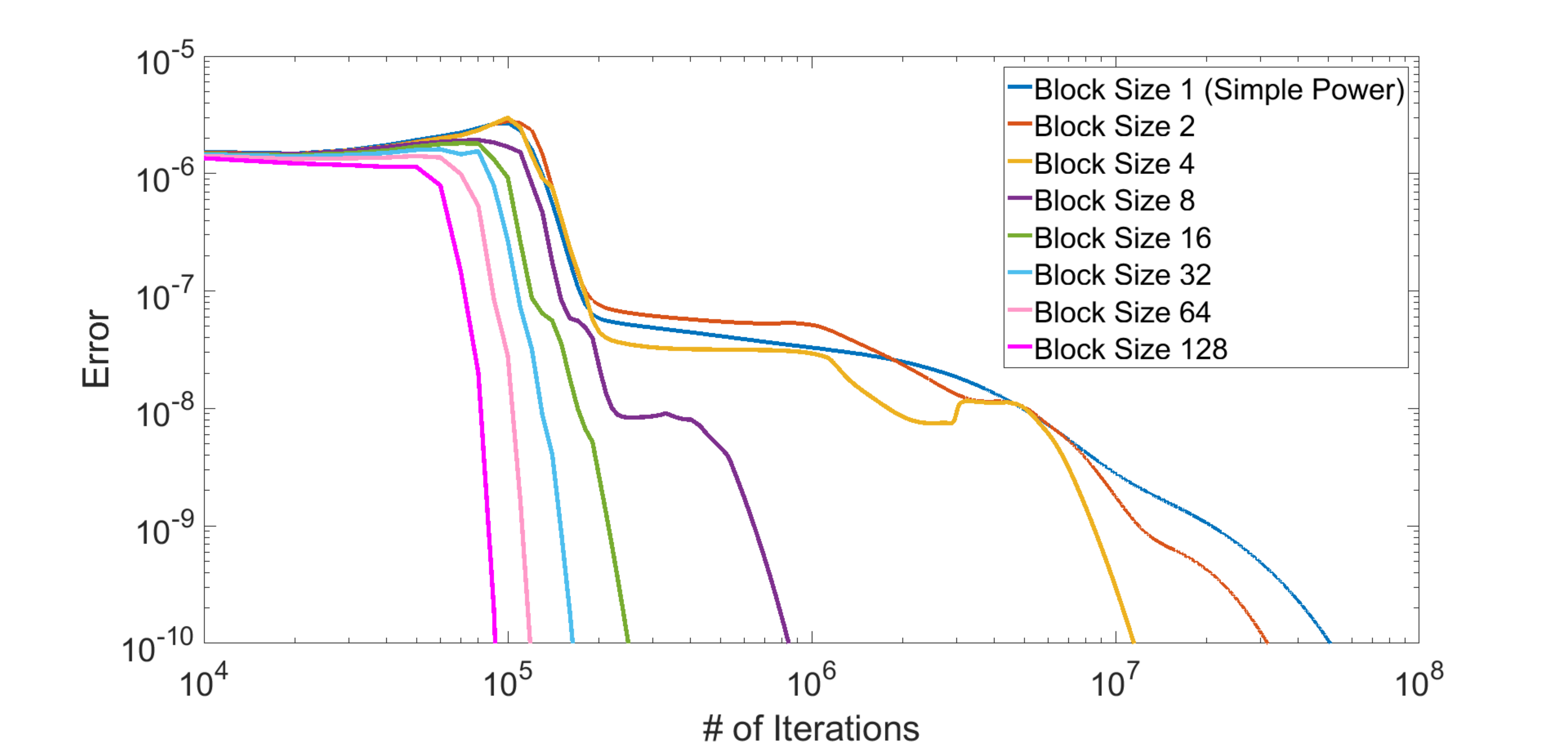}
\caption{Convergence of block power method using various block sizes ($1$--$128$) using the transition matrix for the Ca--$N2$ model}
\label{fig:fig3}       
\end{figure}
	
FIG. \ref{fig:fig4} compares the total numbers of matrix--vector multiplications required to reach convergence for the block power method with different block sizes for the Ca--$N$2 model. We find that the block power method using a block size $16$ is optimal, which requires $92.14\%$ fewer matrix--vector multiplications than the simple power method to obtain a solution with desired accuracy. It is important to note that the number of matrix--vector multiplications used to reach convergence is a good estimate of the performance of the block power method. This is due to the fact that the eigendecomposition step to extract the approximated eigenvector and to estimate convergence is only carried out every $10,000$ iteration steps and thus the computational cost spent on eigendecomposition operations is marginal compared to the vast majority of the computational cost of matrix--vector multiplications in the block power method. 

\begin{figure}[!htbp]
\centering
\includegraphics[scale=0.25]{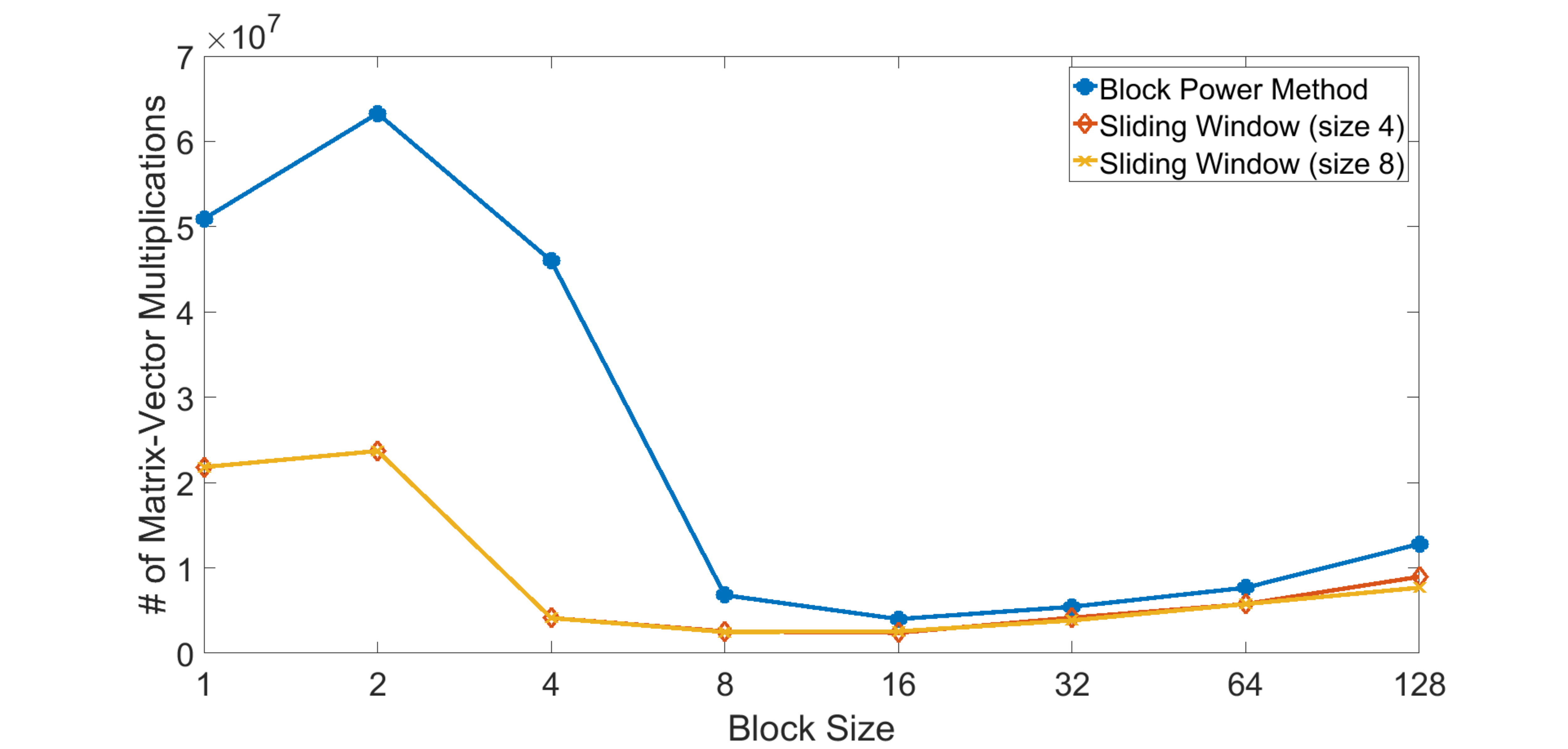}
\caption{The total number of matrix--vector multiplications is shown as a function of the block sizes using the block power method and the block power method with sliding window (sizes $4$ and $8$) for the transition matrix in the Ca--$N2$ model}
\label{fig:fig4}       
\end{figure}

FIG. \ref{fig:fig4} also demonstrates the effectiveness of the sliding window scheme. We find that a sliding window of size $4$ used with the simple power method (i.e., block size $1$) reduces the total number of matrix--vector multiplications required to reach convergence by $57.14\%$. When combining a sliding window with the block power method, the numbers of matrix--vector multiplications required to converge in the desired accuracy are also reduced for different block sizes. However, a larger sliding window does not significantly reduce the total numbers of matrix--vector multiplications any more, indicating that enlarging the sliding window does not provide further useful information to reveal the dominant eigenvector.

\subsection{Calcium release site with $40$ calcium channels}
\label{subsec:3.2}
Next, we used the block power method to calculate the stationary distribution of a $1,246,441 \times 1,246,441$ Markov transition matrix obtained from modeling a calcium release site with $40$ calcium channels (Ca--$N40$). Similar to the Ca--$N2$ model, increasing the block size reduces the total number of iteration steps to reach convergence (FIG. \ref{fig:fig5}). However, when assessed by the total number of matrix--vector multiplications, as shown in FIG. \ref{fig:fig6}, using a block size larger than $1$ does not actually reduce the overall number of matrix--vector multiplications. Our analysis shows that there are significantly more possible ion channel configurations in the Ca--$N40$ model, compared with Ca--$N2$, and thus many more distinct and meaningful calcium release site configurations, resulting in many eigenvalues in the Ca--$N40$ transition matrix with magnitudes are close to $1.0$. As a result, the important properties of the matrix cannot be captured by a small block. However, using the sliding window approach can reduce the total number of matrix--vector multiplications by nearly a half in the block power method with different block sizes. Similar to the Ca--$N2$ model, a small sliding window size ranging from $4$ to $8$ is effective but a larger one does not further improve performance significantly.

\begin{figure}[!htbp]
\centering
\includegraphics[scale=0.25]{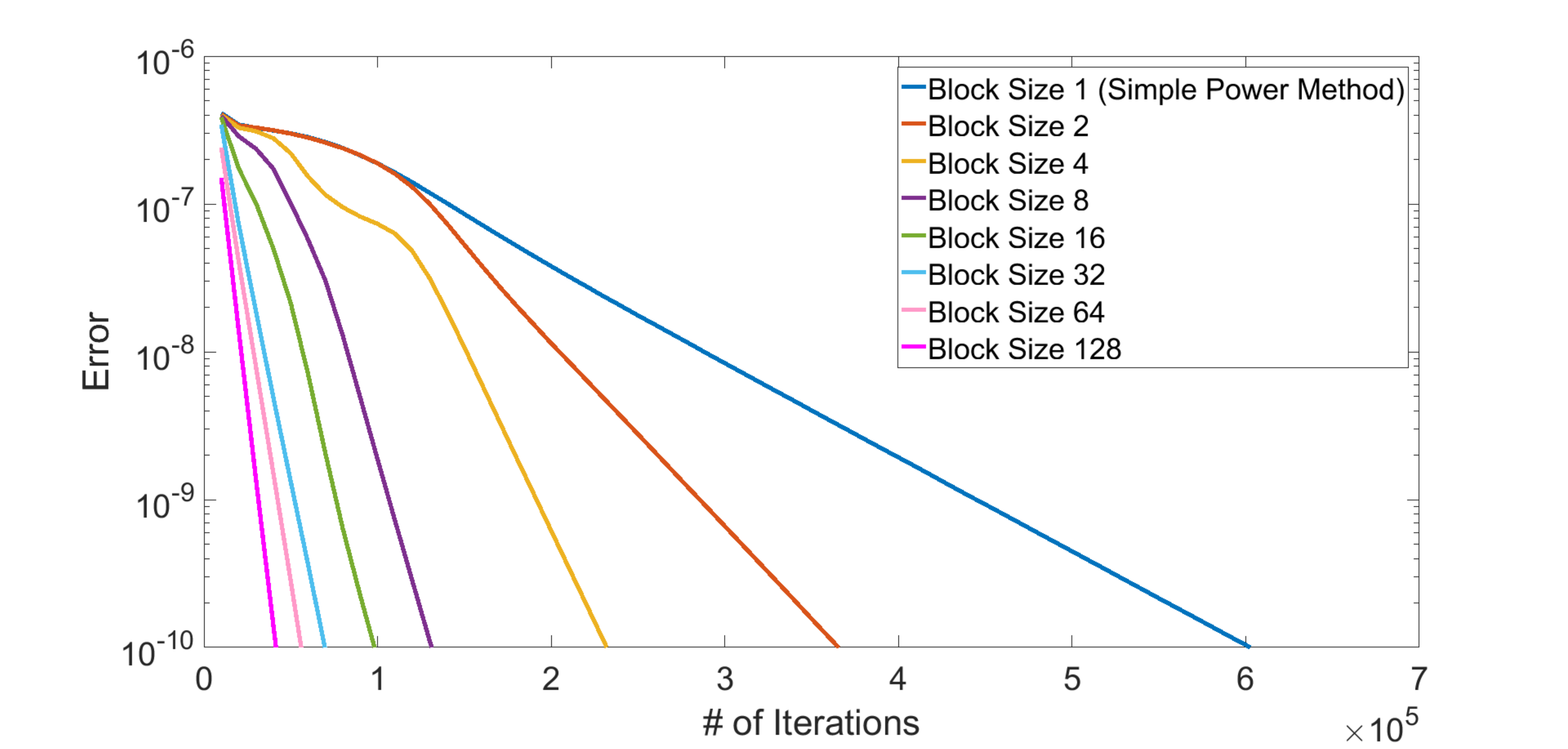}
\caption{Convergence of block power method using various block sizes ($1$--$128$) using the transition matrix for the Ca--$N40$ model}
\label{fig:fig5}       
\end{figure}

\begin{figure}[!htbp]
\centering
\includegraphics[scale=0.476]{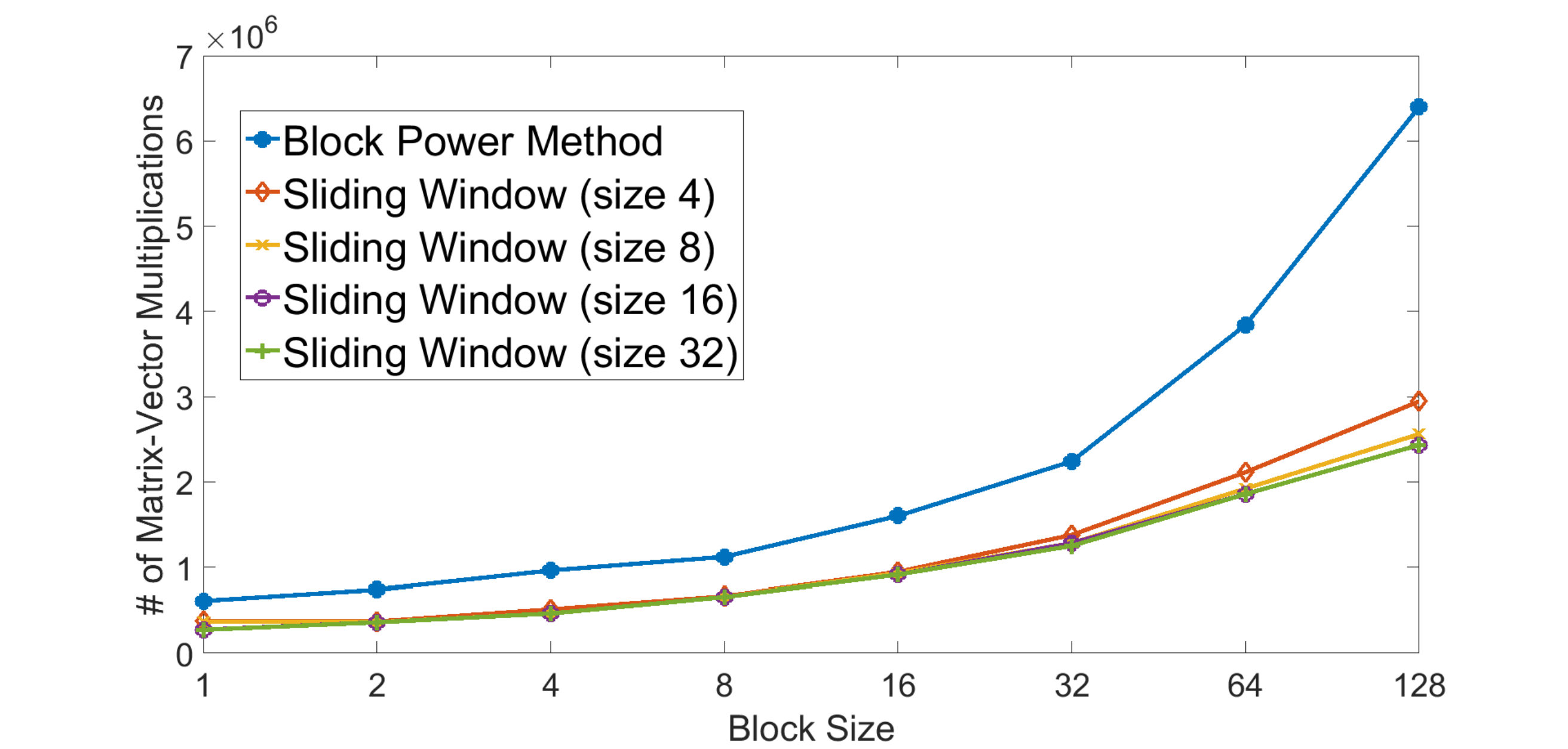}
\caption{The total number of matrix--vector multiplications is shown as a function of block sizes using block power method and block power method with sliding window (sizes $4$, $8$, $16$, and $32$) for the transition matrix in the Ca--$N40$ model}
\label{fig:fig6}       
\end{figure}

\section{Conclusions}
\label{sec:4}

In this paper, we revisit the block power method for approximating the dominant left eigenvector of a Markov transition matrix $P$. When the block matrix consists of $s$ linearly independent vectors, the convergence of the block power method relies on the magnitude of $\lambda_{s+1}$ in $P$. This is a generalization of the fundamental Markov chain theory, in which the power method with a single vector is used and the convergence rate is determined by $|\lambda_2 |$. The block power method can be effectively used to reduce the total number of passes over the transition matrix to reach convergence. We also propose a sliding window scheme to build up blocks from subsequent vectors in the previous iterations. Our computational results show that incorporating a small sliding window with the power method or the block power method can effectively reduce the overall computational cost of finding the Markov chain stationary distribution vector.

\section*{Acknowledgments}
Y.~Li acknowledges support from National Science Foundation through grant number CCF--1066471. H.~Ji acknowledges support from Old Dominion University Modeling and Simulation Fellowship.
\bibliographystyle{siamplain}
\bibliography{mybibfile}
\end{document}